\newtheorem{theo}{Theorem}
\newtheorem{coro}{Corollary}
\newtheorem{prop}{Proposition}
\newtheorem{lemm}{Lemma}
\theoremstyle{remark}
\newtheorem{rema}{\bf Remark}
\begin{document}

\title{$(g,k)$-Fermat curves: an embedding of moduli spaces}

\author{Rub\'en A. Hidalgo}
\address{Departamento de Matem\'atica y Estad\'{\i}stica, Universidad de La Frontera. Temuco, Chile}
%{ORCID: 0000-0003-4070-2819} 
\email{ruben.hidalgo@ufrontera.cl}

\thanks{Partially supported by Projects Fondecyt 1230001 and 1220261}

\subjclass[2010]{Primary 30F10; 30F40; 14H37}
\keywords{Fuchsian groups, Riemann surfaces, Automorphisms, Algebraic curves}

%%%%%%%%%%%%%%%%%
%%%%%%%%%%%%%%%%%

\begin{abstract}
A closed Riemann surface $S$ is called a $(g,k)$-Fermat curve, where $g,k \geq 2$ are integers, if it admits a group $H \cong {\mathbb Z}_{k}^{2g}$ of conformal automorphisms acting freely and with $S/H$ of genus $g$. In this case, we say that $H$ is a $(g,k)$-generalized Fermat group of $S$. In this paper, we provide a description of $S$ in terms of fiber products and 
we study the uniqueness of $(g,k)$-Fermat groups.
\end{abstract}

\maketitle

%%%%%%%%%%%%%%%%%
%%%%%%%%%%%%%%%%%
\section{Introduction}
In 1890, Schwarz \cite{Schwarz} proved that the group of conformal automorphisms of a closed Riemann surface of genus $\gamma \geq 2$ is finite, and later, in 1893, Hurwitz \cite{Hurwitz} obtained that its order is at most $84(\gamma-1)$. The (conformal classes of) Riemann surfaces with non-trivial automorphisms define the branch locus ${\mathcal B}_{\gamma}$ of the moduli space ${\mathcal M}_{\gamma}$ (for $\gamma \geq 4$, ${\mathcal B}_{\gamma}$ corresponds to the topological singular locus). Since the nineteenth century, the study and classification of groups of conformal automorphisms of Riemann surfaces have attracted attention and it is still an active research topic.

Let $S$ be a closed Riemann surface of genus $\gamma \geq 2$ and let ${\rm Aut}(S)$ be its group of conformal automorphisms. If $g, k \geq 2$ are integers and ${\mathbb Z}_{k}^{2g} \cong H \leq {\rm Aut}(S)$ acts freely on $S$ with $X=S/H$ of genus $g$, then we say that $H$ is a {\it $(g,k)$-Fermat group} of $S$, that $S$ is a {\it $(g,k)$-Fermat curve} and that $(S,H)$ a {\it $(g,k)$-Fermat pair}. In this case, $\gamma=\gamma_{g,k}:=1+k^{2g}(g-1)$ (by the Riemann-Hurwitz formula) and $S$ is non-hyperelliptic (Proposition \ref{hipereliptico}). 
As there are many different pairs $(g,k)$ and $(\hat{g},\hat{k})$ such that $\gamma_{g,k}=\gamma_{\hat{g},\hat{k}}$ (for instance, $(g,k)=(2,8)$ and $(\hat{g},\hat{k})=(5,2)$; in which case $\gamma_{2,8}=\gamma_{5,2}=4097$), there is the possibility for the existence of Riemann surfaces which are simultaneously a $(g,k)$-Fermat curve and also a $(\hat{g},\hat{k})$-Fermat curve.

The $(g,k)$-Fermat curves (without that name) already appeared in \cite{Macbeath}, where Macbeath obtained infinitely many values $\gamma \geq 2$ such that there is a Riemann surface of such genus $\gamma$ with the maximal number $84(\gamma-1)$ of automorphisms (these surfaces are called Hurwitz's curves).

Let $(S,H)$ be a $(g,k)$-Fermat pair and set $X=S/H$ (a closed Riemann surface of genus $g$).
In Theorem \ref{teo:fiber}, we observe that $(S,H)$ can be described in terms of a fiber product of $2g$ copies of ${\mathbb Z}_{k}$-cyclic covers of $X$. 
A description in terms of Fuchsian groups is as follows. By the uniformization theorem, there is a torsion-free co-compact Fuchsian group $\Gamma$, acting on the hyperbolic plane ${\mathbb H}^{2}$, such that $X={\mathbb H}^{2}/\Gamma$. If we denote by $\Gamma'$ its derived subgroup, then $\widetilde{X}={\mathbb H}^{2}/\Gamma'$ is the homology cover of $X$; the highest abelian cover of $X$. The surface $\widetilde{X}$ is topologically the Loch Ness monster (an infinite genus orientable surface with exactly one end). The group ${\widetilde H}={\mathbb Z}^{2g} \cong \Gamma/\Gamma' < {\rm Aut}(\widetilde{X})$, called a homology group of $\widetilde{X}$, acts freely on $\widetilde{X}$ and $\widetilde{X}/\widetilde{H}=X$. 
Let $\Gamma^{(k)}$ be the subgroup generated by $\Gamma'$ and the Burnside $k$-kernel $\Gamma^{k}$ (i.e, by the $k$-powers of the elements of $\Gamma$). In this case, 
$\widetilde{X}_{k}={\mathbb H}^{2}/\Gamma^{(k)}$ is a closed Riemann surface admitting 
$H_{k}=\Gamma/\Gamma^{k} \cong {\mathbb Z}_{k}^{2g}$ as a group of conformal automorphisms, which acts freely on it, and such that $\widetilde{X}_{k}/H_{k}=X$. In particular, 
$(\widetilde{X}_{k},H_{k})$ is a $(g,k)$-Fermat pair. We note (see Proposition \ref{propo1}) that 
there is a biholomorphism $\psi:S \to \widetilde{X}_{k}$ such that $\psi H \psi^{-1}=H_{k}$  (for that reason, we also call 
$S$ the {\it $k$-homology cover} of $X$).

Let $\Gamma_{1}$ and $\Gamma_{2}$ be two torsion free co-compact Fuchsian groups  (which, in principle, are not assumed to be of the same genus).  
In 1986, Maskit \cite{Maskit:homology} proved that, if $\Gamma'_{1}=\Gamma'_{2}$, then $\Gamma_{1}=\Gamma_{2}$  (this fact may be thought of as a Kleinian groups version of Torelli's theorem \cite{andreotti}, see Section \ref{Torelli}). This result asserts, in particular, that the homology cover $\widetilde{X}$ determines $X$ and that it admits a unique group of conformal automorphisms $G \cong {\mathbb Z}^{2 \hat{g}}$, where $\hat{g} \geq 2$, which acts freely and with a quotient of genus $\hat{g}$ (in which case,  $\hat{g}=g$ and $G=\widetilde{H}$).

The equality $\Gamma_{1}^{(k)}=\Gamma_{2}^{(k)}$, for some $k$, ensures that both groups $\Gamma_{1}$ and $\Gamma_{2}$ have the same genus $g$ (since the genus of ${\mathbb H}^{2}/\Gamma_{j}^{(k)}$ is $1+k^{2g_{j}}(g_{j}-1)$, where $g_{j}$ is the genus of $\Gamma_{j}$).

Assume there are integers $2 \leq k_{1} < k_{2}<\cdots <k_{j} < \cdots$ such that $\Gamma_{1}^{(k_{j})}=\Gamma_{2}^{(k_{j})}$. 
Since the intersection of all these subgroups $\Gamma_{j}^{(k_{j})}$ is $\Gamma'_{j}$, 
it follows (as a consequence of the above Maskit's result) that $\Gamma_{1}=\Gamma_{2}$. 
Due to this, it seems natural to ask the following: 

\medskip
{\it Q1: Is there an integer $k_{g} \geq 2$ such that  
$\Gamma_{1}^{(k_{g})}=\Gamma_{2}^{(k_{g})}$ ensures $\Gamma_{1}=\Gamma_{2}$?}
\medskip

In terms of $(g,k)$-Fermat curves, the above question can be stated as follows. 

\medskip
{\it Q2: Is there an integer $k_{g} \geq 2$ such that, for any $k \geq k_{g}$, any $(g,k)$-Fermat curve admits a unique $(g,k)$-Fermat group? }

\medskip

A weaker form of the above i the following (see Section \ref{Sec:ejemplo}):

\medskip
{\it Q3: Is every $(g,k)$-Fermat group a normal subgroup?}
\medskip

In Theorem \ref{teo1} (see Theorem \ref{teo2} for the Fuchsian form), we obtain the following partial answers. 
\begin{enumerate}
\item If $p \geq 3$ is a prime integer, $r \geq 1$ and $g\notin \{1+ap+b(p-1)/2; a,b \in \{0,1,\ldots\}\}$, then any two $(g,p^{r})$-Fermat groups of $S$ are conjugated in ${\rm Aut}(S)$.

\item If either 
\begin{enumerate}
\item[(i)]
$k=p^{r}$, where $p>84(g-1)$ is a prime integer and $r\geq 1$, or 
\item[(ii)] $k=2$ and $S/H$ is hyperelliptic, 
\end{enumerate}
then any $(g,k)$-Fermat curve admits a unique $(g,k)$-Fermat group.
\end{enumerate}

In case (2)(ii) above, we are able to provide an explicit algebraic model for $S$ for which the action of $H \cong {\mathbb Z}_{2}^{2g}$ can be explicitly seen (see Section \ref{algebramodelo}). We don't have explicit models in the general situation, but we believe that our fiber product interpretation (Section \ref{fiberproduct}) may be used for constructing algebraic models for $(g,k)$-Fermat pairs \cite{HCurves}.

Finally, in Section \ref{Sec:moduli}, we apply the above results to obtain certain embeddings of moduli spaces. 
There is a natural holomorphic embedding $\Theta_{\Gamma^{(k)}}:{\mathcal T}(\Gamma) \hookrightarrow {\mathcal T}(\Gamma^{(k)})$, where ${\mathcal T}(\Gamma)$ and ${\mathcal T}(\Gamma^{(k)})$ are the Teichm\"uller spaces of $\Gamma$ and $\Gamma^{(k)}$, respectively. As $\Gamma^{(k)}$ is a characteristic subgroup of $\Gamma$, this holomorphic embedding induces a holomorphic map  (not necessarily one-to-one) $\Phi_{\Gamma^{(k)}}:{\mathcal M}(\Gamma) \to {\mathcal M}(\Gamma^{(k)})$ between the corresponding moduli spaces (which are complex orbifolds). In Proposition \ref{propo3}, we provide sufficient conditions for such a map to be injective. This condition is equivalent to the uniqueness, up to conjugation, of the $(g,k)$-Fermat groups. As a consequence (Proposition \ref{teo3}), for 
$p \geq 3$ be a prime integer such that $g\notin \{1+ap+b(p-1)/2; a,b \in \{0,1,\ldots\}\}$, the map $\Phi_{\Gamma^{(p^{r})}}:{\mathcal M}(\Gamma) \to {\mathcal M}(\Gamma^{(p^{r})})$ is injective.

 \medskip
 
 {\bf Notations:}
(a) If $G$ is a group and $A \subset G$, then we denote by $\ll A \gg$ the smallest normal subgroup of $G$ containing $A$.
(b) If $S$ is a Riemann surface and $H<{\rm Aut}(S)$, then we denote by ${\rm Aut}_{H}(S)$ the normalizer of $H$ in ${\rm Aut}(S)$. 
(c) Two pairs, $(S_{1},H_{1})$ and $(S_{2},H_{2})$, where $S_{j}$ is a Riemann surface and $H_{j}<{\rm Aut}(S_{j})$, are {\it isomorphic} (respectively, {\it topologically equivalent}) if there is an biholomorphism (respectively, an orientation preserving homeomorphism) $\psi:S_{1} \to S_{2}$ such that $\psi H_{1} \psi^{-1}=H_{2}$. 

%%%%%%%%%%%%%%%%%%%%%%%
%%%%%%%%%%%%%%%%%%%%%%%
\section{Some general properties on $(g,k)$-Fermat curves}
In this section, we recall some general properties on $(g,k)$-Fermat curves.

%%%%%%%%%%%%%%%%%
\subsection{Non-hyperellipticity}
Closed Riemann surfaces of genus $g \geq 2$ are classified into two classes: the hyperelliptic and non-hyperelliptic ones. The hyperelliptic ones are those admitting a conformal automorphism of order two with exactly $2(g+1)$ fixed points; called the hyperelliptic involution. The hyperelliptic involution is known to be central.

\begin{prop}\label{hipereliptico}
Every $(g,k)$-Fermat curve is non-hyperelliptic.
\end{prop}
\begin{proof}
Let $(S,H)$ be a given $(g,k)$-Fermat pair. Assume $S$ is hyperelliptic and let $\iota \in {\rm Aut}(S)$ be its hyperelliptic involution. As $\iota$ commutes with every element of $H$ and $H$ acts freely, then  its $2\gamma+2=2(1+k^{2g}(g-1))+2=2 k^{2g}(g-1)+4$ fixed points should be a multiple of $k^{2g}$, which is not possible.
\end{proof}

%%%%%%%%%%%%%%%%%%%%
\subsection{A Fuchsian description}
\begin{prop}\label{propo1}
Let $(S,H)$ be a $(g,k)$-Fermat pair, where $k,g \geq 2$, and let $\Gamma$ be a Fuchsian group such that $S/H={\mathbb H}^{2}/\Gamma$. Then
$(S,H)$ and $({\mathbb H}^{2}/\Gamma^{(k)},\Gamma/\Gamma^{(k)})$ are isomorphic. 
\end{prop}
\begin{proof}
 As $S$ is an unbranched Galois cover of $X$, there is a normal subgroup $F$ of $\Gamma$ such that $S={\mathbb H}^{2}/F$ and $H=\Gamma/F$. As $H$ is abelian, $\Gamma' \leq F$ and, as $H \cong {\mathbb Z}_{k}^{2g}$, $\Gamma^{k} \leq F$; so $\Gamma^{(k)} \leq F$. Since $\Gamma^{(k)}$ and $F$ both have index $k^{2g}$ in $\Gamma$, it follows that $F=\Gamma^{(k)}$. 
\end{proof}

\begin{coro}
Any two $(g,k)$-Fermat pairs, where $k,g \geq 2$, are topologically equivalent.
\end{coro}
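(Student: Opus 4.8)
The plan is to deduce this corollary from part~(2) of Theorem~\ref{teo1}, using only the elementary facts that any two closed orientable surfaces of the same genus are related by an orientation-preserving homeomorphism and that such a homeomorphism lifts to characteristic covers.

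\medskip

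\emph{Step 1 (set-up).} Let $(S_{1},H_{1})$ and $(S_{2},H_{2})$ be two $(g,k)$-Fermat pairs. For $j=1,2$ set $X_{j}=S_{j}/H_{j}$, a closed Riemann surface of genus $g$, and fix a torsion free co-compact Fuchsian group $\Gamma_{j}$ with $X_{j}\cong {\mathbb H}^{2}/\Gamma_{j}$, so that $\Gamma_{j}$ is identified with $\pi_{1}(X_{j})$ via the universal covering ${\mathbb H}^{2}\to X_{j}$.

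\emph{Step 2 (a homeomorphism of quotients).} Since $X_{1}$ and $X_{2}$ are closed orientable surfaces of the same genus $g$, there is an orientation-preserving homeomorphism $h:X_{1}\to X_{2}$. After fixing base points and a connecting path, $h$ induces a group isomorphism $h_{*}:\Gamma_{1}\to\Gamma_{2}$. Because the commutator subgroup $\Gamma_{j}'$ and the Burnside $k$-kernel $\Gamma_{j}^{k}$ are defined purely group-theoretically, $h_{*}$ maps $\Gamma_{1}'$ onto $\Gamma_{2}'$ and $\Gamma_{1}^{k}$ onto $\Gamma_{2}^{k}$, hence $h_{*}\big((\Gamma_{1})_{k}\big)=(\Gamma_{2})_{k}$. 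As $(\Gamma_{j})_{k}$ is the (characteristic) subgroup of $\Gamma_{j}$ defining the covering $P_{j}:{\mathbb H}^{2}/(\Gamma_{j})_{k}\to X_{j}$, the compatibility $h_{*}\big((\Gamma_{1})_{k}\big)=(\Gamma_{2})_{k}$ allows $h$ to be lifted to a homeomorphism $\tilde h:{\mathbb H}^{2}/(\Gamma_{1})_{k}\to {\mathbb H}^{2}/(\Gamma_{2})_{k}$ with $P_{2}\circ\tilde h=h\circ P_{1}$. Being a lift of an orientation-preserving map, $\tilde h$ is orientation-preserving, and the induced isomorphism on deck groups is precisely the isomorphism $\Gamma_{1}/(\Gamma_{1})_{k}\to\Gamma_{2}/(\Gamma_{2})_{k}$ obtained by passing $h_{*}$ to the quotients; in particular $\tilde h\,\big(\Gamma_{1}/(\Gamma_{1})_{k}\big)\,\tilde h^{-1}=\Gamma_{2}/(\Gamma_{2})_{k}$.

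\emph{Step 3 (conclusion via Theorem~\ref{teo1}(2)).} By part~(2) of Theorem~\ref{teo1}, for $j=1,2$ there is a biholomorphism $\phi_{j}:S_{j}\to {\mathbb H}^{2}/(\Gamma_{j})_{k}$ with $\phi_{j}H_{j}\phi_{j}^{-1}=\Gamma_{j}/(\Gamma_{j})_{k}$. Then $\psi:=\phi_{2}^{-1}\circ\tilde h\circ\phi_{1}:S_{1}\to S_{2}$ is an orientation-preserving homeomorphism, and $\psi H_{1}\psi^{-1}=\phi_{2}^{-1}\big(\tilde h\,(\Gamma_{1}/(\Gamma_{1})_{k})\,\tilde h^{-1}\big)\phi_{2}=\phi_{2}^{-1}\big(\Gamma_{2}/(\Gamma_{2})_{k}\big)\phi_{2}=H_{2}$, so $(S_{1},H_{1})$ and $(S_{2},H_{2})$ are topologically equivalent.

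\medskip

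The only point that requires genuine care is Step~2: one must check that the lift $\tilde h$ can be chosen orientation-preserving and that it genuinely conjugates the Fermat group $\Gamma_{1}/(\Gamma_{1})_{k}$ onto $\Gamma_{2}/(\Gamma_{2})_{k}$ rather than merely normalizing something larger. This is standard covering-space theory once one observes that $(\Gamma_{j})_{k}$ is characteristic in $\Gamma_{j}$, but it is the crux of the argument; everything else is a formal consequence of Theorem~\ref{teo1}(2).
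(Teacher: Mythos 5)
Your proof is correct and is precisely the argument the paper intends: the paper dispatches this corollary in one sentence (``as $\Gamma_{k}$ is a characteristic subgroup of $\Gamma$, part (2) of Theorem \ref{teo1} asserts the following topological rigidity''), and your Steps 1--3 are exactly the fleshed-out version of that sentence, lifting an orientation-preserving homeomorphism of the genus-$g$ quotients to the characteristic covers and transporting it via the biholomorphisms from part (2). No further comment is needed.
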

\begin{proof}
If $\Gamma_{1}$ and $\Gamma_{2}$ are two co-compact torsion free Fuchsian groups of genus $g$, then there is a orientation-preserving homeomorphism 
$\varphi:{\mathbb H}^{2} \to {\mathbb H}^{2}$ such that  $\Gamma_{2}=\varphi \Gamma_{1} \varphi^{-1}$.
As $\Gamma_{j}^{(k)}$ is a characteristic subgroup of $\Gamma_{j}$,  also $\Gamma_{2}^{(k)}=\varphi \Gamma_{1}^{(k)} \varphi^{-1}$. The result now follows from Proposition \ref{propo1}.
\end{proof}

%%%%%%%%%%%%%%%%%%
\subsection{A universal property of $(g,k)$-Fermat curves}\label{Sec:Galois}
Let $(S,H)$ be a $(g,k)$-Fermat pair and let $\pi_{H}:S \to X=S/H$ be a Galois covering map with ${\rm deck}(\pi_{H})=H$. If $\Gamma$ is a Fuchsian group such that ${\mathbb H}^{2}/\Gamma=X$, then we may assume that $S={\mathbb H}^{2}/\Gamma^{(k)}$. There is a short exact sequence
$1 \to H \to {\rm Aut}_{H}(S) \stackrel{\rho}{\to} {\rm Aut}(X) \to 1$.
In particular, for every $A<{\rm Aut}(X)$ we may consider $\rho^{-1}(A)=\widetilde{A}<{\rm Aut}_{H}(S)$.

\begin{prop}\label{teo:cubriente}
Let $P:R \to X$ be an abelian Galois (unbranched) covering map with ${\rm deck}(P)=G$, where $G$ is a finite abelian group with exponent a divisor of $k$. Then
\begin{enumerate}[leftmargin=*,align=left]
\item There exists $L<H$ and a Galois covering map $\pi_{L}:S \to R$ with ${\rm deck}(\pi_{L})=L$, such that $\pi_{H}=P \circ \pi_{L}$, in particular, $G=H/L$. 

\item Let $A < {\rm Aut}(X)$ and $\pi:X \to X/A$ be a Galois (possible branched) cover with ${\rm deck}(\pi)=A$ and
consider the  (possible branched) covering map 
$\pi \circ P:S \to X/A$. Let $\rho^{-1}(A)=\widetilde{A} < {\rm Aut}(S)$ and let $K<H$ be the maximal $\widetilde{A}$-invariant subgroup of $H$ contained in $L$. If $Z=\widetilde{X}/K$, then $Q:Z \to X/A$, the branched covering induced by $\widetilde{A}/K$, is the closure Galois covering of $\pi \circ P$.

\item Assume $\tau \in {\rm Aut}(X)$ is an automorphism of prime order $p\geq 2$ which does not divides $k$, let  $\pi:X \to X/\langle \tau \rangle$ be a Galois (possible branched) cover with ${\rm deck}(\pi)=\langle \tau \rangle$ and let us consider the  (possible branched) covering map 
$\pi \circ P:R \to X/\langle \tau \rangle$. Then
{\rm (a)} There exists $\phi \in {\rm Aut}(S)$, of order $p$, such that $\langle \rho(\phi) \rangle=\langle \tau \rangle$.
{\rm (b)} The (branched) covering $\pi \circ P:S \to X/\langle \tau \rangle$ is a Galois covering if and only if $L$ is $\phi$-invariant under conjugation.
{\rm (c)} Let us assume that $X/\langle \tau \rangle$ has genus zero and $\tau$ has exactly $r \geq 3$ fixed points. If $K<N$ is invariant under conjugation by $\phi$, then $K \cong {\mathbb Z}_{k}^{s}$, for some $s \in \{0,1,\ldots,(p-1)(r-2)\}$ such that $k^{s} \equiv 1 \mod(p)$. In particular, in this case,  the Galois closure $Q:Z \to X/\langle \tau \rangle$ has deck group isomorphic to ${\mathbb Z}^{(p-1)(r-2)-s} \rtimes {\mathbb Z}_{p}$, where $s$ is maximum such that 
$L$ contains a $\phi$-invariant subgroup of $H$ being isomorphic to ${\mathbb Z}_{k}^{s}$.

\end{enumerate}
\end{prop}
\begin{proof}
(1) The abelian covering map $P:R \to X$ is determined by a surjective homomorphism $\theta:\Gamma \to G$ with kernel $\Gamma_{R}$ such that $R={\mathbb H}^{2}/\Gamma_{R}$ and $G=\Gamma/\Gamma_{R}$. As $G$ is abelian, $\Gamma' < \Gamma_{R}$ and, as every $k$-power of elements of $G$ is trivial, also $\Gamma^{k}<\Gamma_{R}$. So, $\Gamma^{(k)}<\Gamma_{R}$ and $L=\Gamma_{R}/\Gamma^{(k)}$.
(2) The Galois closure, in this case, corresponds to the subgroup $K=\cap_{a \in \widetilde{A}} \; aLa^{-1}$, which is maximal $\widetilde{A}$-invariant subgroup of $H$ contained in $L$.
(3) We know the existence of some $\eta \in {\rm Aut}(S)$ with $\rho(\eta)=\tau$. It follows that $\eta^{p} \in H$. If $\eta$ has order $p$, we take $\phi=\eta$. Otherwise, as $(\eta^{k})^{p}=(\eta^{p})^{k}=1$, and $(k,p)=1$, we have that $\rho(\eta^{k})$ must be non-trivial, and we may take $\phi=\eta^{k}$ (this takes care of (a)). Part (b) is a direct consequence of part (2). 
The first part of (c) follows from the existence of adapted homology basis for $X$ under $\tau$ due to Gilman \cite{Gilman} (see Remark \ref{Gilman} below). The second part is then consequence of part (a) and part (2).
\end{proof}

\begin{rema}[Gilman's adapted homology basis]\label{Gilman}
Let $X$ be a closed Riemann surface of genus at least two and $\tau \in {\rm Aut}(X)$ be a conformal automorphism of order a prime integer $p$ such that $X/\langle \tau \rangle$ has genus zero and exactly $r \geq 3$ cone points. Then there exists a basis $a_{1},\ldots,a_{2g}$ of $H_{1}(X;{\mathbb Z})$ (it might not be a canonical one) admitting a disjoint decomposition into $(r-2)$ sub-collections 
$\{a_{j_{1}},\ldots, a_{j_{p-1}}\}$, $j=1,\ldots, r-2$, such that, for each $j$ it holds that, if we set $a_{j_{p}}=(a_{j_{1}}a_{j_{2}}\cdots a_{j_{p-1}})^{-1}$ and $\tau_{*}$ is the induced action of $\tau$ on $H_{1}(X;{\mathbb Z})$, then
$\tau_{*}(a_{j_{1}})=a_{j_{2}}, \tau_{*}(a_{j_{2}})=a_{j_{3}}, \ldots, \tau_{*}(a_{j_{p-2}})=a_{j_{p-1}}, \tau_{*}(a_{j_{p-1}})=a_{j_{p}}, \tau_{*}(a_{j_{p}})=a_{j_{1}}.$
\end{rema}

%%%%%%%%%%%%%%%%%%%%%%%
\subsection{$(g,k)$-Fermat curves and Torelli's theorem}\label{Torelli}
Let $X={\mathbb H}^{2}/\Gamma$ be a closed Riemann surface of genus $g \geq 2$ and let $H^{1,0}(X) \cong {\mathbb C}^{g}$ be its space of holomorphic one-forms. The homology group $H_{1}(X;{\mathbb Z})$ is naturally embedded, as a lattice, in the dual space $(H^{1,0}(X))^{*}$ of $H^{1,0}(X)$ by integration of forms. The quotient $JX=(H^{1,0}(X))^{*}/H_{1}(X;{\mathbb Z})$ is a $g$-dimensional complex torus with a principally polarized structure obtained from the intersection form on homology. Torelli's theorem \cite{andreotti} asserts that $X$ is, up to biholomorphisms, determined by the principally polarized abelian variety $JX$. Let $\pi:(H^{1,0}(X))^{*} \to JX$ be a  holomorphic Galois cover induced by the action of $H_{1}(X;{\mathbb Z})$. If we fix a point $p \in X$, then there is a natural holomorphic embedding  $\varphi:X \hookrightarrow JX: q \mapsto \left[\int_{p}^{q}\right]$. It holds that (i) $\pi^{-1}(\varphi(X))=\widetilde{X}$ is a Riemann surface admitting the group $H_{1}(X;{\mathbb Z})$ as a group of conformal automorphisms such that $X=\widetilde{X}/H_{1}(X;{\mathbb Z})$ and (ii) $\widetilde{X}={\mathbb H}^{2}/\Gamma'$ ($\widetilde{X}$ is homeomorphic to the Loch Ness monster, i.e., the infinite genus surface with exactly one end). In this way, Torelli's theorem is ``in some sense'' equivalent to the commutator rigidity for $\Gamma$.
If $\alpha_{1},\ldots,\alpha_{g}, \beta_{1},\ldots, \beta_{g}$ is a basis for $H_{1}(X;{\mathbb Z})$, then $\langle \alpha_{1}^{k},\ldots,\alpha_{g}^{k}, \beta_{1}^{k},\ldots, \beta_{g}^{k}\rangle$ is a basis for $H_{1}(X;{\mathbb Z})^{(k)}$ (the subgroup of $H_{1}(X;{\mathbb Z})$ generated by the $k$-powers of all its elements). The quotient $g$-dimensional torus $J_{k}X=(H^{1,0}(X))^{*}/H_{1}(X;{\mathbb Z})^{(k)}$ has as induced polarization the $k$-times the principal one and it admits a group $H=H_{1}(X;{\mathbb Z})/H_{1}(X;{\mathbb Z})^{(k)}\cong {\mathbb Z}_{k}^{2g}$ of automorphisms such that $JX=J_{k}X/H$. There is a natural isomorphism between $JX$ and $J_{k}X$ preserving the polarizations (amplification by $k$). In particular, $X$ is uniquely determined (up to isomorphisms) by $J_{k}X$. Let $\pi_{k}:(H^{1,0}(X))^{*} \to J_{k}X$ be a holomorphic Galois cover induced by the action of $H_{1}(X;{\mathbb Z})^{(k)}$.
If $S=\pi_{k}(\widetilde{X}) \subset J_{k}X$, then $(S,H)$ is a $(g,k)$-Fermat pair with $X=S/H$ and $S={\mathbb H}^{2}/\Gamma^{(k)}$. In this way, the uniqueness of $(g,k)$-Fermat groups, up to conjugacy,  is somehow related to the determination of $X$, up to isomorphisms, by the abelian variety $J_{k}X$.

\subsubsection{\bf The prime case}
Let us assume $k=p$ is a prime integer and let $(S,H)$ be a $(g,p)$-Fermat pair and set $X=S/H$.
There is associated to the covering $S \to X$ the Prym variety $P(S/H)$, which satisfies that $JS$ is isogenous to the product $JX \times P(S/X)$.

If $L \cong {\mathbb Z}_{p}^{2g-1}$ is a subgroup of $H$, then $Y=S/L$ is a closed Riemann surface of genus $p(g-1)+1$ and there is 
a Galois unbranched covering $Y \to X$ with deck group $H/L \cong {\mathbb Z}_{p}$. Associated to this covering is the Prym variety $P(Y/X)$, which satisfies that $JY$ is isogenous to the product $JX \times P(Y/X)$. 

Let us recall that  the number of these maximal  subgroups $L$ of $H$ is $m_{p,g} = \dfrac{p^{2g}-1}{p-1}$.

\begin{theo}[\cite{CHR}]
Let $(S,H)$ be a $(g,p)$-Fermat pair, where $p$ is a prime integer and set $X=S/H$.
If $\mathcal{L} =  \{L_1 , \ldots , L_{m_{p,g}}\}$ is the set of  maximal  subgroups of $H$ and $Y_{j}=S/L_{j}$, then 	
$$
 P(S/X) \cong_{\textup{isog}}  \prod_{j=1}^{m_{q,g}} P(Y_j/X).
$$

\end{theo}

%%%%%%%%%%%%%%%%%
%%%%%%%%%%%%%%%%%
%%%%%%%%%%%%%%%%%
\section{$(g,k)$-Fermat curves as fiber products}\label{fiberproduct}
In this section, we provide a fiber product description of a $(g,k)$-Fermat pair $(S,H)$. The first description is given in terms of certain maximal subgroups of $H$. The second one, is provided in ternsm of ceratin subgroups of the Fuchsian group uniformizing the quotient $S/H$.

%%%%%%%%%%%%%%%%
\subsection{\bf Fiber product property in terms of $H$}
\begin{theo}\label{teo:fiber}
Let $(S,H)$ be a $(g,k)$-Fermat pair and $\pi:S \to X$ be a Galois covering, with deck group $H$.
If $H=\langle a_{1},\ldots,a_{2g}\rangle \cong {\mathbb Z}_{k}^{2g}$, then let $K_{j} \cong {\mathbb Z}_{k}^{2g-1}$ be generetad by $\{a_{1},\ldots, a_{j-1},a_{j+1},\ldots, a_{2g}\}$.
Set $Y_{j}=S/K_{j}$ and let  $Q_{j}:S \to Y_{j}$ be a Galois cover with deck group $K_{j}$. Let $P_{j}:Y_{j} \to X=S/H$ be a cyclic covering, with deck group $H/K_{j} \cong {\mathbb Z}_{k}$, such that $\pi=P_{j} \circ Q_{j}$.
Then $(S,H)$ is the fiber product of the $2g$ pairs $(Y_{j}=S/K_{j},P_{j}:Y_{j} \to X)$, $j=1,\ldots,2g$.
\end{theo}
\begin{proof}
The fiber product, provided by the $2g$ pairs $(Y_{j}=S/K_{j},P_{j}:Y_{j} \to X)$, where $j=1,\ldots,2g$, is given by the following one-dimensional compact analytic space):
$$Z=\{(y_{1},\ldots,y_{2g}) \in Y_{1} \times \cdots \times Y_{2g}: P_{1}(y_{1})=P_{2}(y_{2})=\cdots=P_{2g}(y_{2g})\} \subset Y_{1} \times \cdots \times Y_{2g}.$$

Let $\pi_{j}:Z \to Y_{j}$, defined by $\pi_{j}(y_{1},\ldots,y_{2g})=y_{j}$, and let 
$P:Z \to X$ the (analytic) map defined by $P_{1} \circ \pi_{1} = \cdots = P_{2g} \circ \pi_{2g}$.

As each of the $P_{j}$ has no branch values, it follows from the implicit function theorem, that $Z$ is a finite union of closed Riemann surfaces and, moreover, 
$P:Z \to X$ is a Galois covering whose deck group is $J:=C_{1} \times \cdots \times C_{2g} \cong {\mathbb Z}_{k}^{2g}$, where $C_{j}=H/K_{j}$. 

The connected components of $Z$ are known to be pairwise isomorphic closed Riemann surfaces \cite{HCV}. 
Let $Z_{1}$ be one of the connected componts of $Z$ and let $J_{1}$ be its $J$-stabilizer.  Let us consider the restrictions of $\pi_{j}:Z_{j} \to Y_{j}$ and $P:Z_{1} \to X$. Note that $P$ is a Galois covering with deck group $J_{1}$.

The universal property of fiber products asserts that there is a holomorphic (covering) map $\varphi:S \to Z_{1}$ such that $Q_{j}=\pi_{j} \circ \varphi$ \cite{HCV} (so $\pi=P_{j} \circ Q_{j}=P \circ \varphi$).

The subgroups $K_{j}$ satisfy that, for $1 \leq j_{1}<j_{2}<\cdots<j_{l} \leq 2g$, one has that $K_{j_{1}}\cap \cdots \cap K_{j_{l}} \cong {\mathbb Z}_{k}^{2g-l}$. Again, by the universal property of the fiber product, $Z_{1}$ is obtained as the quotient of $S$ by the full intersection $K_{1} \cap \cdots \cap K_{2g}$, which is the trivial group, it follows that $\varphi$ is injective. Now, as $\pi=P \circ \varphi$, the map $P:Z_{1} \to X$ has degree $k^{2g}$, so $J_{1}=J$. In particular, $Z=Z_{1}$.
This means that $(Z,J)$ is a $(g,k)$-Fermat pair such that $Z/J=S/H$ and that there is an isomorphism $\varphi:S \to Z$ such that $\varphi H \varphi^{-1}=J$.
\end{proof}

\subsection{\bf Fiber product property in terms of Fuchsian groups}
In the above, we started from a $(g,k)$-Fermat pair and we observed that it can be described as a fiber product. In the next, we provide the same result from the point of view of the Fuchsian group uniformizing the quotient surface $X$.
Let $\Gamma$ be a torsion free co-compact  Fuchsian group such that $X={\mathbb H}^{2}/\Gamma$. 
Let us consider a canonical set of generators for $\Gamma$, say
$$\Gamma=\langle \alpha_{1},\alpha_{2},\cdots,\alpha_{2g-1},\alpha_{2g}: [\alpha_{1},\alpha_{2}]\cdots [\alpha_{2g-1},\alpha_{2g}]=1 \rangle,$$
and the following $2g$ index $k$ normal subgroups 
$$
\Gamma_{j}=\ll \alpha_{1},\ldots, \alpha_{j-1}, \alpha_{j}^{k},\alpha_{j+1},\ldots,\alpha_{2g}\gg, \; j=1,\ldots, 2g,
$$
where $\ll A \gg$ denotes the smallest normal subgroup of $\Gamma$ containing $A \subset \Gamma$. Note that $K=\cap_{j=1}^{2g} \Gamma_{j}=\Gamma^{(k)}$, which is a normal subgroup of index $k^{2g}$ of $\Gamma$. Set $Y_{j}={\mathbb H}^{2}/\Gamma_{j}$ (a closed Riemann surface of genus four) and $P_{j}:Y_{j} \to X$ be the induced Galois cover given by the inclusion of $\Gamma_{j} \lhd \Gamma$ and whose deck group is $G_{j}=\Gamma/\Gamma_{j}=\langle \varphi_{j} \rangle \cong {\mathbb Z}_{k}$. The fiber product provided by the pairs $(Y_{1},P_{1}),\ldots,(Y_{2g},P_{2g})$ is given by
$$Z=\{(y_{1},\ldots,y_{2g}) \in Y_{1} \times \cdots \times Y_{2g}: P_{1}(y_{1})=\cdots= P_{j}(y_{j})=\cdots =P_{2g}(y_{2g})\}.$$

As a consequence of the implicit function theorem, it can be seen that $Z$ is a compact Riemann surface 
(which it might be disconnected).  Any of these connected components is isomorphic to ${\mathbb H}^{2}/K$ \cite{HRV}. As 
$K$ has index $k^{2g}$ in $\Gamma$, we obtain that $Z$ is a connected Riemann surface.
On $Z$ there is the group $H=\langle a_{1},\ldots,a_{2g} \rangle \cong {\mathbb Z}_{k}^{2g}$ of conformal automorphisms where
$$a_{j}(x_{1},\ldots,x_{4})=(x_{1},\ldots,x_{j-1},\varphi_{j}(x_{j}),x_{j+1},\ldots,x_{2g}).$$

There are natural Galois covers
$$\pi_{j}:Z \to Y_{j}: (y_{1},\ldots,y_{2g}) \mapsto y_{j},$$
whose deck group $K_{j} \cong {\mathbb Z}_{k}^{2g-1}$, generated by the set $\{a_{1},\ldots,a_{2g} \} \setminus \{a_{j}\}$, and
$$P:Z \to X: (y_{1},\ldots,y_{2g}) \mapsto P_{1}(y_{1}),$$
whose deck group is $H$. By the definitions, $P=P_{j} \circ \pi_{j}$, for each $j=1,\ldots,2g$.
It follows that $(Z,H)$ is a $(g,k)$-Fermat pair with $Z/H=X$. 

Summarizing the above construction is the following.

\begin{theo}\label{fibrado}
Let $X$ be a genus $g \geq 2$ Riemann surface and let $\Gamma$ be a torsion-free co-compact Fuchsian group such that $X={\mathbb H}^{2}/\Gamma$. Let 
$\Gamma_{1},\ldots,\Gamma_{2g}$ be the above pairwise distinct index $k$ normal subgroups  of $\Gamma$. Then the associated $(g,k)$-Fermat pair $(S,H)$ of $X=S/H$ is given as 
the fiber product of the $2g$ pairs 
$({\mathbb H}^{2}/\Gamma_{1},\Gamma/\Gamma_{1}),\ldots, ({\mathbb H}^{2}/\Gamma_{2g},\Gamma/\Gamma_{2g})$.
\end{theo}

%%%%%%%%%%%%%%%%%%
%%%%%%%%%%%%%%%%%
\section{Uniqueness of $(g,k)$-Fermat groups}
In the foloowing, we observe the uniqueness of the $(g,k)$-Fermat groups for the case of  $k=p^{r}$, where $r \geq 1$ and $p$ is a prime integer.

\begin{theo}\label{teo1}
Let $(S,H)$ be a $(g,p^{r})$-Fermat pair, where $p$ is a prime integer and $r \geq 1$. Then
\begin{enumerate}[leftmargin=*,align=left]
\item If $p>84(g-1)$, then $H$ is the unique $(g,p^{r})$-Fermat group of $S$. 
\item If $r=1$ and $p \geq 3$ is a prime integer such that $g\notin \{1+ap+b(p-1)/2; a,b \in \{0,1,\ldots\}\}$, then $H$ is the unique, up to conjugation, $(g,p)$-Fermat group of $S$.
\item If $r=1$, $p=2$ and $S/H$ is hyperelliptic, then $H$ is the unique $(g,2)$-Fermat group of $S$.
\end{enumerate}
\end{theo}
\begin{proof}
Let $(S,H)$ be a $(g,k)$-Fermat pair, let $\gamma=\gamma_{g,k}$ be the genus of $S$, and let  $\Gamma$ be a Fuchsian group such that $X=S/H={\mathbb H}^{2}/\Gamma$.

%%%%%%%%%%%%%%%%%%
\subsubsection*{\bf Part (1).}
Let us assume $k=p^{r}$, where $p$ is a prime integer such that  $p > 84(g-1)$. Let $L \geq 1$ be such that $|{\rm Aut}(S)|=|{\rm Aut}_{H}(S)|L$.
 As $X$ admits no conformal automorphisms of order $p$ (by the Riemann-Hurwitz formula, every automorphism of prime order of $X$ must have order at most $2g+1$), it follows that $H$ is a $p$-Sylow subgroup of ${\rm Aut}(S)$. Let $n_{p}$ be the number of 
$p$-Sylow subgroups of ${\rm Aut}(S)$. By  Sylow's theorem, $n_{p}$ is congruent to one module $p$ and divides $|{\rm Aut}(S)|=|{\rm Aut}_{H}(S)|L$. 
 
 As seen in Section \ref{coro0}, $|{\rm Aut}_{H}(S)| = p^{2rg} |{\rm Aut}(X)|$, so $n_{p}$ must divide $|{\rm Aut}(X)|L$. Since
 $|{\rm Aut}(X)|p^{2rg}L=|{\rm Aut}_{H}(S)|L=|{\rm Aut}(S)| \leq 84(\gamma-1)=84 p^{2rg}(g-1)$ (the inequality is the Hurwitz upper bound and the last equality is due to the fact that the genus of $S$ is $\gamma=1+p^{2rg}(g-1)$), we have that $|{\rm Aut}(X)|L \leq 84(g-1)$, so $n_{p} \leq 84(g-1)$.
 If $n_{p}>1$, that is, $n_{p}=1+sp$, some $s \geq 1$, then $p<1+sp \leq 84(g-1)$, a contradiction.

%%%%%%%%%%%%%%%%%%
\subsubsection*{\bf Part (2).}
If $p \geq 3$, then the condition $g\notin \{1+ap+b(p-1)/2; a,b \in \{0,1,\ldots\}\}$ asserts (by the Riemann-Hurwitz formula) that there is no conformal automorphism of order $p$ on a closed Riemann surface of genus $g$. It follows from Sylow's theorem that $H$ is a $p$-Sylow subgroup.

%%%%%%%%%%%%%%%%%%%
\subsubsection*{\bf Part (3).}
Let $k=2$ and $X$ be hyperelliptic, with hyperelliptic involution $\iota$. Let $K$ be a Fuchsian group acting on the hyperbolic plane ${\mathbb H}^{2}$ such that ${\mathbb H}^{2}/K=X/\langle \iota \rangle$ (the Riemann sphere with exactly $2g+2$ cone points of order two). The group $K$ has a presentation of the form
$K=\langle y_{1},\ldots,y_{2g+2}:y_{1}^{2}=\cdots=y_{2g+2}=y_{1}y_{2}\cdots y_{2g+2}=1\rangle.$
The (unique) index two torsion-free subgroup of $K$ is  
$\Gamma^{*}=\langle y_{1}y_{2},\ldots, y_{1}y_{2g+2}\rangle.$
In this case, $X={\mathbb H}^{2}/\Gamma^{*}$ (the hyperelliptic involution $\iota$ is induced by each of the generators $y_{i}$); so we may set $\Gamma=\Gamma^{*}$.
We claim that $K'=\Gamma^{2}$. In fact, as (i) $\Gamma^{2}$ is a characteristic subgroup of $\Gamma$ and (ii) $\Gamma$ is a normal subgroup of $K$, it follows that $\Gamma^{2}$ is a normal subgroup of $K$. As each of the commutators $[y_{i},y_{j}]=y_{i}y_{j}y_{i}^{-1}y_{j}^{-1}=(y_{i}y_{j})^{2} \in \Gamma^{2}$, we observe that $K'$ is a subgroup of $\Gamma^{2}$. Since $[K:\Gamma^{2}]=[K:\Gamma][\Gamma:\Gamma^{2}]=2 \times 2^{2g}=2^{2g+1}$ and $[K:K']=2^{2g+1}$, it follows the desired equality.
In this way, $S={\mathbb H}^{2}/K'$ is a generalized Fermat curve of type $(2,2g+1)$ whose generalized Fermat group of the same type is $K/K' \cong {\mathbb Z}_{2}^{2g+1}$ (see \cite{GHL} for details on generalized Fermat curves). The generalized Fermat group $K/K'$ is generated by involutions $a_{1}, \ldots, a_{2g+1}$, where $a_{j}$ is induced by the generator $y_{j}$. We set by $a_{2g+2}$ the one induced by $y_{2g+2}$, so $a_{1}\cdots a_{2g+2}=1$. It is known that the only elements of $K/K'$ acting with fixed points on $S$ are the elements $a_{j}$ (see \cite{GHL}). This permits to 
note that $H$ is the unique index two subgroup of $K/K'$ acting freely on $S$, this being $H=\langle a_{1}a_{2},a_{1}a_{3},\ldots, a_{1}a_{2g+2}\rangle$.

Now, let us assume there is another $(g,2)$-Fermat group $L$ of $S$. If $L$ is a subgroup of $K/K'$, then $L=H$ (by the uniqueness of the index two subgroups of $K/K'$ acting freely on $S$). So, let us assume that there is some $\alpha \in L-H$. As $K/K'$ is the unique generalized Fermat group of type $(2,2g+1)$ of $S$ \cite{HKLP}, $\alpha$ normalizes it. As $H$ is its unique index two subgroup acting freely on $S$, $\alpha$ also normalizes $H$. 
As $\alpha$ has order two, and it normalizes $K/K'$, it induces a M\"obius transformation $\beta$ of order two that permutes the $2g+2$ cone points of $S/(K/K')=\widehat{\mathbb C}$. There are two possibilities: (A) none of the cone points is fixed by $\beta$, or (B) $\beta$ fixes exactly two of them. Up to post-composition by a suitable M\"obius transformation, we may assume these cone points to be $\infty, 0, 1, \lambda_{1},\ldots, \lambda_{2g-1}$ and that in case (A) $\beta(\infty)=0$, $\beta(1)=\lambda_{1}$ and $\beta(\lambda_{2j+1})=\lambda_{2j}$ ($j=1,\ldots, g-1$) and that in case (B) $\beta(\infty)=\infty$, $\beta(0)=0$, $\beta(1)=\lambda_{1}$, and $\beta(\lambda_{2j+1})=\lambda_{2j}$ ($j=1,\ldots, g-1$). Note that in case (A) $\beta(z)=\lambda_{1}/z$ and in case  (B) we must have $\lambda_{1}=-1$ and $\beta(z)=-z$.
In \cite{GHL} it was proved that $S$ can be represented by an algebraic curve  of the form
\begin{equation}\label{curva}
\left\{ \begin{array}{ccc}
x_{1}^{2}+x_{2}^{2}+x_{3}^{2}&=&0\\
\lambda_{1}x_{1}^{2}+x_{2}^{2}+x_{4}^{2}&=&0\\
\vdots & \vdots& \vdots\\
\lambda_{2g-1}x_{1}^{2}+x_{2}^{2}+x_{2g+2}^{2}&=&0
\end{array}
\right\} \subset {\mathbb P}^{2g+1}
\end{equation}
and, in this model, $a_{j}([x_{1}:\cdots:x_{2g+2}])=[x_{1}:\cdots:x_{j-1}:-x_{j}:x_{j+1}:\cdots:x_{2g+2}]$.

Assume we are in case (A). Following Corollary 9 in \cite{GHL},  
$\alpha([x_{1}:\cdots:x_{2g+2}])=$
$$=[x_{2}:A_{2}x_{1}:A_{3}x_{4}:A_{4}x_{3}:\cdots: A_{2j-1}x_{2j}:A_{2j}x_{2j-1}:\cdots:A_{2g+1}x_{2g+2}:A_{2g+2}x_{2g+1}],$$
where 
$A_{2}^{2}=\lambda_{1}, \; A_{3}^{2}=1, \; A_{2j-1}^{2}=\lambda_{2j-4}, \; A_{2j}^{2}=\lambda_{2j-3}.$
As $\alpha$ has order two, we must also have
$A_{2}=A_{3}A_{4}=A_{5}A_{6}=\cdots=A_{2j-1}A_{2j}=\cdots=A_{2g+1}A_{2g+2}.$

The point $[1:\mu:p_{3}:\cdots:p_{2g+2}]$, where
$\mu^{2}=A_{2}$, $p_{3}=\sqrt{(\lambda_{1}-1)/(1-\mu^{2})}$, $p_{4}=\mu p_{3}/A_{3}$, $p_{2j}=\mu p_{2j-1}/A_{2j-1}$ and 
$p_{2j-1}=\sqrt{(\lambda_{2j-3}-\lambda_{2j-4})/(1-A_{2j}/A_{2j-1})}$, 
is a fixed point of $\alpha$ in $S$ (in the above algebraic model). This is a contradiction to the fact that $\alpha$ must act freely on $S$.

Assume we are in case (B). Again, in this case $\alpha([x_{1}:\cdots:x_{2g+2}])=
[x_{1}:A_{2}x_{2}:$ $A_{3}x_{4}:A_{4}x_{3}:\cdots: A_{2j-1}x_{2j}:A_{2j}x_{2j-1}:\cdots:A_{2g+1}x_{2g+2}:A_{2g+2}x_{2g+1}],$
where, for every $j$, $A_{j}^{2}=-1.$ In this case,
$\alpha^{2}([x_{1}:\cdots:x_{2g+2}])=[x_{1}:-x_{2}:x_{3}:\cdots:x_{2g+2}],$
which is a contradiction for $\alpha$ to be an involution.
\end{proof}

\begin{coro}
Let $p \geq 2$ be a prime integer and $S$ a $(2,p)$-Fermat curve. Then
\begin{enumerate}
\item If $p=2$ or $p \geq 87$, then $S$ has a unique $(2,p)$-Fermat group.
\item If $3 \leq p \leq 83$, then any two $(2,p)$-Fermat groups of $S$ are conjugated.
\end{enumerate}
\end{coro}
\begin{proof}
Case (1) and case (2) for $p \geq 7$ is direct consequence of Theorem \ref{teo1}. Case (2) for $p \in \{3,5\}$ follows from the fact that a Riemann surface of genus two admitting an automorphism of such order $p$ is uniquely determined up to biholomorphisms.
\end{proof}

In terms of Fuchsian groups, Theorem \ref{teo1}  may be stated as follows.

\begin{theo}\label{teo2}
Let $\Gamma_{1}$ and $\Gamma_{2}$ be torsion free co-compact Fuchsian groups. Let $g \geq 2$ be the genus of $S={\mathbb H}^{2}/\Gamma_{1}$, $p \geq 2$ be a prime integer and $r \geq1$.
\begin{enumerate}[leftmargin=*,align=left]
\item If $p>84(g-1)$ and $\Gamma_{1}^{(p^{r})}=\Gamma_{2}^{(p^{r})}$, then  $\Gamma_{1}=\Gamma_{2}$.
\item If $p \geq 3$ is such that $g\notin \{1+ap+b(p-1)/2; a,b \in \{0,1,\ldots\}\}$ and  $\Gamma_{1}^{(p^{r})}=\Gamma_{2}^{(p^{r})}$, then
$\Gamma_{1}$ and $\Gamma_{2}$ are ${\rm Aut}({\mathbb H}^{2})$-conjugated.
\item If $p=2$, $S$ is hyperelliptic and $\Gamma_{1}^{(2)}=\Gamma_{2}^{(2)}$, then  $\Gamma_{1}=\Gamma_{2}$.
\end{enumerate}
\end{theo}

\subsection{A remark on the group ${\rm Aut}(S)$ for $S$ a $(g,k)$-Fermat curve}\label{coro0}
Let $(S,H)$ be a $(g,k)$-Fermat pair, where $k,g \geq 2$, and let $\Gamma$ be a Fuchsian group such that $S/H={\mathbb H}^{2}/\Gamma$.

\noindent
(1) As $\Gamma^{(k)}$ is a characteristic subgroup of $\Gamma$, there is a short exact sequence
$$1 \to H \to {\rm Aut}_{H}(S) \to {\rm Aut}(X) \to 1,$$ 
where ${\rm Aut}_{H}(S)$ is the normalizer of $H$ in ${\rm Aut}(S)$.
In particular (this was used by Macbeath \cite{Macbeath} in order to construct infinitely many Hurwitz curves), 
$$|{\rm Aut}(S)| \geq |{\rm Aut}_{H}(S)|=|{\rm Aut}(X)|k^{2g}=\left(\frac{|{\rm Aut}(X)|}{g-1}\right)(\gamma-1).$$ 

\noindent
(2) Part (1) of Theorem \ref{teo1} asserts that, if $k=p^{r}$, where $p>84(g-1)$ is a prime integer and $r \geq 1$, then $H$ is the unique 
$(g,p^{r})$-Fermat group. In particular, ${\rm Aut}(S)={\rm Aut}_{H}(S)$ and $|{\rm Aut}(S)|=p^{2rg}|{\rm Aut}(X)|$. 
If $g \geq 3$, then (in the generic situation) one has that ${\rm Aut}(S/H)$ is trivial, in which case, ${\rm Aut}(S)=H$.
For $g=2$, we have that $|{\rm Aut}(S)|=|{\rm Aut}(X)|p^{4r}\leq 48 p^{4r}$ (equality holds when $X$ is the closed Riemann surface of genus two with exactly $48$ automorphisms: $y^{2}=x(x^{4}-1)$).

\subsection{Example}\label{Sec:ejemplo}
Let us consider a $(g,k)$-Fermat pair $(S,H)$, set $X=S/H$ and assume that $S/{\rm Aut}(X)$ has triangular signature $(0;r,s,t)$. In this case, as $S/{\rm Aut}_{H}(S)=X/{\rm Aut}(X)$, if we assume this triangular signature to be finitely maximal \cite{Singerman} (i.e., a triangular Fuchsian group of type $(r,s,t)$ cannot be estrictly contained as a finite index subgroup of other Fuchsian group), then necessarily ${\rm Aut}_{H}(S)={\rm Aut}(S)$ and, in particular, $H$ is a normal subgroup of ${\rm Aut}(S)$.

\subsection{On $2$-homology covers of hyperelliptic surfaces}\label{algebramodelo}
Let $(S,H)$ be a $(g,2)$-Fermat pair such that $S/H$ hyperelliptic defined by 
$y^{2}=x(x-1)\prod_{j=1}^{2g-1}(x-\lambda_{j})$. We have seen, in the proof of part (3) of Theorem \ref{teo1}, that $S$ is the generalized Fermat curve of type $(2,2g+1)$ defined by
\begin{equation}
S:= \left\{ \begin{array}{ccc}
x_{1}^{2}+x_{2}^{2}+x_{3}^{2}&=&0\\
\lambda_{1}x_{1}^{2}+x_{2}^{2}+x_{4}^{2}&=&0\\
\vdots & \vdots& \vdots\\
\lambda_{2g-1}x_{1}^{2}+x_{2}^{2}+x_{2g+2}^{2}&=&0
\end{array}
\right\} \subset {\mathbb P}^{2g+1}.
\end{equation}

It could be of interest in describing algebraic curves for $S$, for any type $(g,k)$, in terms of the algebraic equations for $S/H$. This will be pursued elsewhere (an strategy is to use the fiber product description in Section \ref{fiberproduct}).

\begin{coro}\label{2-homology}
Two hyperelliptic Riemann surfaces are isomorphic if and only if their corresponding $2$-homology covers are.
\end{coro}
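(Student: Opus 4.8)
The plan is to derive both implications formally from parts (2) and (4) of Theorem~\ref{teo1}. Denote the two given hyperelliptic surfaces by $X_{1}$ and $X_{2}$; each $X_{j}$ (of some genus $g_{j} \geq 2$, since it is hyperelliptic) has its $2$-homology cover $S_{j}$, carrying a $(g_{j},2)$-Fermat group $H_{j} \cong {\mathbb Z}_{2}^{2g_{j}}$ with $S_{j}/H_{j} = X_{j}$.

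For the implication $X_{1} \cong X_{2} \Rightarrow S_{1} \cong S_{2}$, I would uniformize both surfaces by one and the same torsion-free co-compact Fuchsian group $\Gamma$ (possible because isomorphic Riemann surfaces are uniformized by conjugate such groups), so that $X_{1} = X_{2} = {\mathbb H}^{2}/\Gamma$. Part (2) of Theorem~\ref{teo1} then identifies each pair $(S_{j},H_{j})$ with the pair $({\mathbb H}^{2}/\Gamma_{2}, \Gamma/\Gamma_{2})$, which depends only on $\Gamma$; hence $(S_{1},H_{1}) \cong (S_{2},H_{2})$ and, in particular, $S_{1} \cong S_{2}$. Equivalently, one may keep two conjugate Fuchsian groups: a M\"obius transformation conjugating them also conjugates the characteristic subgroups $\Gamma_{2}$ of each, and therefore descends to a biholomorphism between the two covers.

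For the converse $S_{1} \cong S_{2} \Rightarrow X_{1} \cong X_{2}$, let $\psi:S_{1} \to S_{2}$ be a biholomorphism. First I would note that $S_{1}$ and $S_{2}$ share the genus $1+2^{2g_{j}}(g_{j}-1)$; since $g \mapsto 1+2^{2g}(g-1)$ is strictly increasing in $g$, this forces $g_{1}=g_{2}=:g$. Next, $\psi H_{1}\psi^{-1}$ is a $(g,2)$-Fermat group of $S_{2}$ whose quotient is $\psi$-biholomorphic to $S_{1}/H_{1}=X_{1}$, which is hyperelliptic; by part (4) of Theorem~\ref{teo1} the surface $S_{2}$ admits a \emph{unique} $(g,2)$-Fermat group, so $H_{2}=\psi H_{1}\psi^{-1}$. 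Consequently $\psi$ carries $H_{1}$-orbits to $H_{2}$-orbits and descends to a biholomorphism $X_{1}=S_{1}/H_{1} \to S_{2}/H_{2}=X_{2}$, with holomorphic inverse induced by $\psi^{-1}$.

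The only step requiring any thought is this last one: a priori an isomorphism of the covers need not respect the Fermat subgroups, and it is precisely the uniqueness in Theorem~\ref{teo1}(4) --- available here because the relevant quotient is hyperelliptic --- that forces $\psi H_{1}\psi^{-1}=H_{2}$ and hence lets $\psi$ descend. Everything else is bookkeeping, and I do not anticipate a genuine obstacle.
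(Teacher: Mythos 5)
Your proposal is correct and follows essentially the same route as the paper: the forward direction via conjugation of the characteristic subgroups $\Gamma_{j}^{2}$ (equivalently part (2) of Theorem~\ref{teo1}), and the converse by first forcing $g_{1}=g_{2}$ from the strict monotonicity of $g\mapsto 1+2^{2g}(g-1)$ and then invoking the uniqueness of the $(g,2)$-Fermat group from part (4). The only cosmetic difference is that the paper normalizes so that $\Gamma_{1}^{2}=\Gamma_{2}^{2}$ and compares two Fermat groups on one surface, whereas you transport $H_{1}$ by the biholomorphism $\psi$; these are the same argument.
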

\begin{proof}
If $X_{j}={\mathbb H}^{2}/\Gamma_{j}$, then $S_{j}={\mathbb H}^{2}/\Gamma_{j}^{2}$. One direction is clear, if $X_{1}$ and $X_{2}$ are isomorphic, then $\Gamma_{1}$ and $\Gamma_{2}$ are conjugated by some element of ${\rm PSL}_{2}({\mathbb R})$. Such a conjugation preserves the characteristic subgroups, that it also conjugates $\Gamma_{1}^{2}$ and $\Gamma_{2}^{2}$. In the other direction, without lost of generality, we may assume that $\Gamma_{1}^{2}=\Gamma^{2}_{2}$. In particular, $S_{1}=S_{2}=S$, so they have the same genus, that is, $1+2^{g_{1}}(g_{1}-1)=1+2^{g_{2}}(g_{2}-1)$. This asserts that $g_{1}=g_{2}=g$. In fact, if we assume $g_{1}>g_{2}$, then the above equality is equivalent to $1<2^{g_{1}-g_{2}}=(g_{2}-1)/(g_{1}-1)<1$, a contradiction. Now, this asserts that $S$ is a $(g,2)$-Fermat curve and it contains $(g,2)$-Fermat groups $H_{1}$ and $H_{2}$ such that $X_{j}=S/H_{j}$. It follows from part (3) of Theorem \ref{teo1} that $H_{1}=H_{2}$, that is, $X_{1}$ and $X_{2}$ are isomorphic.
\end{proof}

%%%%%%%%%%%%%%%%%%%%%%
%%%%%%%%%%%%%%%%%%%%%%
\section{An application to embeddings of moduli spaces}\label{Sec:moduli}

%%%%%%%%%%%%%%%%%%
\subsection{Teichm\"uller and moduli spaces of Fuchsian groups of the first kind}
Let $\Gamma$ be a finitely generated Fuchsian group of the first kind, that is, a discrete subgroup of the group ${\rm PSL}_{2}({\mathbb R})$ of conformal automorphisms of the upper half plane ${\mathbb H}^{2}$, whose limit set is all the extended real line. We will also assume that $\Gamma$ is not a triangular group, that is, $S_{\Gamma}={\mathbb H}^{2}/\Gamma$ is not an orbifold of genus zero with exactly three cone points (including punctures). 
By a {\it Fuchsian geometric representation} of $\Gamma$ we mean an injective homomorphism $\theta:\Gamma \hookrightarrow {\rm PSL}_{2}({\mathbb R}): a \mapsto \theta(a)=f \circ a \circ f^{-1}$, where $f:{\mathbb H}^{2} \to {\mathbb H}^{2}$ is a quasiconformal homeomorphism whose Beltrami coefficient $\mu \in L^{\infty}_{1}({\mathbb H}^{2})$ is compatible with $\Gamma$, that is, $\mu(a(z))\overline{a'(z)}=\mu(z)a'(z)$, for every $a \in \Gamma$ and a.e. $z \in {\mathbb H}^{2}$ (see \cite{Nag} for details). 
Two such Fuchsian geometric representations $\theta_{1}$ and $\theta_{2}$ are {\it Teichm\"uller equivalent} if there is some $A \in {\rm PSL}_{2}({\mathbb R})$ such that $\theta_{2}(a)=A \circ \theta_{1}(a) \circ A^{-1}$, for every $a \in \Gamma$. The set ${\mathcal T}(\Gamma)$, of those Teichm\"uller equivalence classes, is called the  {\it Teichm\"uller space}  of $\Gamma$. Let ${\mathbb L} \subset {\mathbb C}$ be the lowest half-plane and  $H^{2,0}(\Gamma)$ be the complex Banach space of all holomorphic maps $\psi:{\mathbb L} \to {\mathbb C}$ such that $\psi(a(z))a'(z)^{2}=\psi(z)$, for $a \in \Gamma$ and $z\in {\mathbb L}$, and $||\psi /{\rm Im}(z)^{2}||_{\infty}<\infty$. It is known the existence of an embedding (Bers embedding) $\rho:{\mathcal T}(\Gamma) \hookrightarrow H^{2,0}(\Gamma)$, with $\rho({\mathcal T}(\Gamma))$ being an open bounded contractible subset \cite{Bers, Fletcher-Markovic, Gardiner, Gardiner-Lakic,Nag}, in particular, providing a global holomorphic chart for ${\mathcal T}(\Gamma)$ and turn it into a simply connected Banach complex manifold, which is finite-dimensional.
If ${\mathbb H}^{2}/\Gamma$ is a surface of genus $g \geq 0$ with some number $r \geq 0$ of cone points, including punctures, then ${\mathcal T}(\Gamma)$ has dimension $3g-3+r$ (see, for instance, \cite{Nag}).

A Fuchsian geometric representation $\theta:\Gamma \hookrightarrow {\rm PSL}_{2}({\mathbb R})$, with $\theta(\Gamma)=\Gamma$, induces an 
automorphism $\rho \in {\rm Aut}(\Gamma)$, defined by $\rho(a)=\theta(a)$, called a {\it geometric automorphism} of $\Gamma$. Let us denote by ${\rm Aut}^{+}(\Gamma)$ the subgroup of ${\rm Aut}(\Gamma)$ formed by all the geometric automorphisms. Every $\rho \in {\rm Aut}(\Gamma)$ that 
preserves parabolic elements is, by Nielsens' theorem, of the form $\rho(a)=h \circ a \circ h^{-1}$, where $h:{\mathbb H}^{2} \to {\mathbb H}^{2}$ is some homeomorphism, which may or not preserve the orientation; so ${\rm Aut}^{+}(\Gamma)$ is an index two subgroup of ${\rm Aut}(\Gamma)$.

As the group ${\rm Inn}(\Gamma)$, of inner automorphisms of $\Gamma$, is a is a normal subgroup of ${\rm Aut}^{+}(\Gamma)$, we may consider 
${\rm Out}^{+}(\Gamma)={\rm Aut}(\Gamma)^{+}/{\rm Inn}(\Gamma)$, the group of {\it geometric exterior} automorphisms of $\Gamma$. There is natural action, by holomorphic automorphisms, of ${\rm Aut}^{+}(\Gamma)$ on ${\mathcal T}(\Gamma)$ defined by
${\rm Aut}^{+}(\Gamma) \times {\mathcal T}(\Gamma) \to {\mathcal T}(\Gamma): (\rho,[\theta]) \mapsto [\theta \circ \rho^{-1}].$
This action of ${\rm Aut}^{+}(\Gamma)$ is not faithful as, for $\rho \in {\rm Inn}(\Gamma)$, it holds that $\theta$ and $\theta \circ \rho^{-1}$ are Teichm\"uller equivalent. The induced action (again by holomorphic automorphisms)
${\rm Out}^{+}(\Gamma) \times {\mathcal T}(\Gamma) \to {\mathcal T}(\Gamma): (\rho,[\theta]) \mapsto [\theta \circ \rho^{-1}]$
turns out to be faithful (with the exception of few cases). Moreover,  ${\rm Out}^{+}(\Gamma)$ acts properly discontinuously on ${\mathcal T}(\Gamma)$. In \cite{Royden},  Royden proved that these are all the biholomorphisms of ${\mathcal T}(\Gamma)$ for $\Gamma$ torsion-free co-compact (i.e., $S_{\Gamma}$ is a closed Riemann surface) and later extended by Earle and Kra in \cite{EK} to the case that $\Gamma$ is finitely generated of type $(g,n)$ (i.e., $S_{\Gamma}$ is an analytically finite Riemann surface of genus $g$ and $r$ cone points) if $2g+r>4$.

 The quotient topological space ${\mathcal M}(\Gamma)={\mathcal T}(\Gamma)/{\rm Out}^{+}(\Gamma)$ is called the {\it moduli space} of $\Gamma$ (it is formed by all the ${\rm PSL}_{2}({\mathbb R})$-conjugacy classes of the Fuchsian groups $\theta(\Gamma)$, where $\theta$ runs over all Fuchsian representations of it, is a complex orbifold of the same dimension as ${\mathcal T}(\Gamma)$. 
 
%%%%%%%%%%%%%%%%%
\subsection{Embedding of moduli spaces}
Let $K$ be a finite index subgroup of $\Gamma$, different from the trivial one. Then $K$ is also finitely generated of the first kind. As every Fuchsian geometric representation of $\Gamma$ restricts to a Fuchsian geometric representation of $K$ and this restriction process respects the Teichm\"uller equivalence, there is a holomorphic embedding $\Theta_{K}:{\mathcal T}(\Gamma) \hookrightarrow {\mathcal T}(K)$. Let $\pi_{\Gamma}:{\mathcal T}(\Gamma) \to {\mathcal M}(\Gamma)$ and $\pi_{K}:{\mathcal T}(K) \to {\mathcal M}(K)$ be the corresponding projection holomorphic maps onto the moduli spaces. In general, there might not be a map $\Phi_{K}: {\mathcal M}(\Gamma) \to {\mathcal M}(K)$ such that $\pi_{K} \circ \Theta_{K}=\Phi_{K} \circ \pi_{\Gamma}$. 

We say that $K$ is a {\it geometrical characteristic subgroup} if it is invariant under the action of ${\rm Out}^{+}(\Gamma)$. For instance, if $K$ is a characteristic subgroup of $\Gamma$, then it is a geometrical one.

\begin{lemm}\label{lema1}
The existence of $\Phi_{K}: {\mathcal M}(\Gamma) \to {\mathcal M}(K)$ such that $\pi_{K} \circ \Theta_{K}=\Phi_{K} \circ \pi_{\Gamma}$ is given if 
$K$ is a geometrical characteristic subgroup of $K$.
\end{lemm}
\begin{proof}
Note that for the existence of $\Phi_{K}$ is enough to the following property. Let $\rho_{1}$ and $\rho_{2}$ be Fuchsian geometrical representations of $\Gamma$ which are ${\rm Out}^{+}(\Gamma)$-equivalent. Then we need to be sure that the restricted Fuchsian geometrical representations to $K$ are equivalent under ${\rm Out}^{+}(K)$. As we are assuming $K$ to be geometrical characteristic, every element of ${\rm Out}^{+}(\Gamma)$ induces (by restriction) an element of ${\rm Out}^{+}(K)$.
\end{proof}

So, from the above, if $K$ is a geometrical characteristic subgroup of $\Gamma$, then we have the induced map $\Phi_{K}$ (which is holomorphic if both $\Gamma$ and $K$ are finitely generated). Next, we describe the conditions for its injectivity.

\begin{lemm}\label{lema2}
Let $K$ be a geometrical characteristic subgroup of $\Gamma$. Then $\Phi_{K}$ is injective if and only if for every pair of Fuchsian geometrical representations
$\theta_{1}$ and $\theta_{2}$ of $\Gamma$, such that $\theta_{1}(K)=\theta_{2}(K)$, it holds that $\theta_{1}(\Gamma)$ and $\theta_{2}(\Gamma)$ are ${\rm PSL}_{2}({\mathbb R})$-conjugated.
\end{lemm}
\begin{proof}
Given $[\theta(K)] \in {\mathcal M}(K)$, the cardinality of $\Phi_{K}^{-1}([\theta(K)])$ is equal to the maximal number of Fuchsian geometric representations $\{\theta_{j}\}_{j \in J}$, such that $\theta_{j}(K)=\theta(K)$ and, for $j_{1} \neq j_{2}$,  $\theta_{j_{1}}(\Gamma)$ and $\theta_{j_{2}}(\Gamma)$ are not  ${\rm PSL}_{2}({\mathbb R})$-conjugated. 
\end{proof} 

%%%%%%%%%%%%%%%%%
\subsubsection{Example}
Let us consider a Fuchsian group $\Gamma \cong \langle \delta_{1},\ldots,\delta_{n+1}:\delta_{1}^{k}=\cdots=\delta_{n+1}^{k}=\prod_{j=1}^{n+1}\delta_{j}=1\rangle$, where  $(n-2)(k-2)>1$.
In this case, ${\mathbb H}^{2}/\Gamma$ is an orbifold of genus $g=0$ and with exactly $n+1$ cone points, each one of order $k$. 
The surface ${\mathbb H}^{2}/\Gamma'$ is of genus $g_{k,n}=1+k^{n-1}((n-1)(k-1)-2)/2$.

In \cite{HKLP} it was observed that $\Gamma'$ satisfies the conditions of Lemma \ref{lema2} (in fact, the much stronger result was proved in the above paper:  ``{\it If \; $\Gamma'_{1}=\Gamma'_{2}$, then $\Gamma_{1}=\Gamma_{2}$}").  As a consequence, $\Phi_{\Gamma'}$ provides a holomorphic embedding ${\mathcal M}_{0,n+1}={\mathcal M}(\Gamma) \subset {\mathcal M}(\Gamma')={\mathcal M}_{g_{k,n}}$.

Note that, if $n=3$ and $k \geq 4$, then ${\mathcal T}(\Gamma)={\mathbb H}^{2}$ and the 
above asserts that the Teichm\"uller disc  $\Theta_{\Gamma'}({\mathcal T}(\Gamma)) \subset {\mathcal T}(\Gamma')$ projects under $\pi_{\Gamma'}$ to a genus zero one-punctured curve (that is, a copy of the complex plane) in the moduli space ${\mathcal M}(\Gamma')={\mathcal M}_{g_{k,3}}$ (i.e. an example of a Teichm\"uller curve).

%%%%%%%%%%%%%%%%
\subsection{The case of $(g,k)$-Fermat curves}
Let us consider a co-compact torsion-free Fuchsian group $\Gamma$ of genus $g \geq 2$ and let $k \geq 2$.  The surface $S={\mathbb H}^{2}/\Gamma^{(k)}$ is a $(g,k)$-Fermat curve and $H=\Gamma/\Gamma^{(k)}$ a $(g,k)$-Fermat group. As $\Gamma^{(k)}$ is a characteristic subgroup of $\Gamma$, Lemma \ref{lema1} asserts the existence of a holomorphic map
$\Phi_{\Gamma^{(k)}}:{\mathcal M}(\Gamma) \to {\mathcal M}(\Gamma^{(k)})$.

\begin{prop}\label{propo3}
The injectivity of $\Phi_{\Gamma^{(k)}}:{\mathcal M}(\Gamma) \to {\mathcal M}(\Gamma^{(k)})$ is equivalent for every $(g,k)$-Fermat curve to have a unique, up to conjugation by conformal automorphisms, $(g,k)$-Fermat group. 
\end{prop}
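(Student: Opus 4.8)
The plan is to unwind the definitions of the maps $\Phi_{\Gamma_k}$, $\Theta_{\Gamma_k}$, $\pi_\Gamma$, $\pi_k$ and translate the rigidity property for $K = \Gamma_k$ into a statement purely about $(g,k)$-Fermat pairs. Recall from the discussion in Section~\ref{Sec:moduli} that, since $\Gamma_k$ is a characteristic (hence geometrically characteristic) subgroup of $\Gamma \cong \pi_g$, the fiber $\Phi_{\Gamma_k}^{-1}([\theta(\Gamma_k)])$ over a point of $\mathcal{M}(\Gamma_k)$ has cardinality equal to the number of $\mathrm{PSL}_2(\mathbb{R})$-conjugacy classes of groups $\theta_j(\Gamma)$ with $\theta_j(\Gamma_k) = \theta(\Gamma_k)$; thus $\Phi_{\Gamma_k}$ is injective if and only if the rigidity property holds: whenever $\theta_1(\Gamma_k) = \theta_2(\Gamma_k)$ for two Fuchsian representations $\theta_1,\theta_2$ of $\Gamma$, the groups $\theta_1(\Gamma)$ and $\theta_2(\Gamma)$ are conjugate in $\mathrm{PSL}_2(\mathbb{R})$.

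First I would set up the dictionary between Fuchsian data and $(g,k)$-Fermat pairs. Given any Fuchsian representation $\theta$ of $\Gamma$, the group $\theta(\Gamma_k) = \theta(\Gamma)_k$ is a torsion-free co-compact Fuchsian group, the surface $S_\theta = \mathbb{H}^2/\theta(\Gamma_k)$ is a closed Riemann surface, and $H_\theta := \theta(\Gamma)/\theta(\Gamma_k) \cong \mathbb{Z}_k^{2g}$ acts freely on $S_\theta$ with quotient of genus $g$; so $(S_\theta, H_\theta)$ is a $(g,k)$-Fermat pair, and by part~(2) of Theorem~\ref{teo1} every $(g,k)$-Fermat pair arises this way. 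Conversely, a $(g,k)$-Fermat group $H$ of a fixed $(g,k)$-Fermat curve $S$ corresponds to a normal subgroup $F \triangleleft \Gamma_0$ (where $S = \mathbb{H}^2/\Gamma_0$ for some torsion-free co-compact $\Gamma_0$) with $\Gamma_0/F \cong \mathbb{Z}_k^{2g}$ and $F$ such that $(\Gamma_0/F)$ acts freely; by part~(2) of Theorem~\ref{teo1} such $F$ equals $(\Gamma_0)_k$, i.e. $\Gamma_0$ itself is the relevant uniformizing group of $X = S/H$ and $F = (\Gamma_0)_k$.

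Next I would argue both implications. Suppose $\Phi_{\Gamma_k}$ is injective, and let $S$ be a $(g,k)$-Fermat curve carrying two $(g,k)$-Fermat groups $H_1, H_2$. Realize $S = \mathbb{H}^2/\Gamma_k^{(i)}$ where $\Gamma^{(i)}$ uniformizes $X_i = S/H_i$; since $\Gamma^{(i)} \cong \pi_g$, each $\Gamma^{(i)} = \theta_i(\Gamma)$ for a Fuchsian representation $\theta_i$, and the common surface $S$ together with part~(2) of Theorem~\ref{teo1} forces $\theta_1(\Gamma_k) = \theta_1(\Gamma)_k$ and $\theta_2(\Gamma_k) = \theta_2(\Gamma)_k$ to be the \emph{same} Fuchsian group (both equal to the uniformizing group of $S$, after conjugating so the surfaces coincide on the nose). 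Rigidity then gives that $\theta_1(\Gamma)$ and $\theta_2(\Gamma)$ are $\mathrm{PSL}_2(\mathbb{R})$-conjugate, and tracking the conjugating element through $H_i = \theta_i(\Gamma)/\theta_i(\Gamma_k)$ shows $H_1$ and $H_2$ are conjugate by a conformal automorphism of $S$; so the $(g,k)$-Fermat group is unique up to conjugation. Conversely, if every $(g,k)$-Fermat curve has a unique $(g,k)$-Fermat group up to conjugation by $\mathrm{Aut}(S)$, then given $\theta_1,\theta_2$ with $\theta_1(\Gamma_k) = \theta_2(\Gamma_k) =: F$, the surface $S = \mathbb{H}^2/F$ carries the two $(g,k)$-Fermat groups $\theta_1(\Gamma)/F$ and $\theta_2(\Gamma)/F$, which by hypothesis are conjugate by some $\phi \in \mathrm{Aut}(S)$; lifting $\phi$ to $A \in \mathrm{PSL}_2(\mathbb{R})$ (which normalizes $F$) gives $A\,\theta_1(\Gamma)\,A^{-1} = \theta_2(\Gamma)$, establishing rigidity and hence injectivity of $\Phi_{\Gamma_k}$.

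The main obstacle I anticipate is the bookkeeping in the ``conversely'' direction: one must be careful that a conformal isomorphism $\phi : S \to S$ with $\phi H_1 \phi^{-1} = H_2$ genuinely lifts to an element of $\mathrm{PSL}_2(\mathbb{R})$ normalizing $F$ and conjugating $\theta_1(\Gamma)$ to $\theta_2(\Gamma)$ — this uses that $F$ is normal in both $\theta_i(\Gamma)$ and that a lift of $\phi$ normalizes $F$ precisely because $F = \theta_i(\Gamma)_k$ is characteristic, so the induced map on $\mathbb{H}^2/F$ being $\phi$ pins down the lift up to $F$. A secondary subtlety is matching the notion of ``isomorphic pairs'' (biholomorphism conjugating the groups) used in the statement of uniqueness with the conjugacy-of-subgroups formulation; these coincide because an arbitrary biholomorphism $S \to S$ is a conformal automorphism. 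Once these identifications are made carefully, the equivalence is essentially a tautology given the fiber-cardinality description of $\Phi_{\Gamma_k}$ recalled above.
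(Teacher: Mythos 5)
Your proposal is correct and follows essentially the same route as the paper: the paper's proof is exactly the unwinding of the fiber--cardinality (rigidity) description of $\Phi_{\Gamma_{k}}$ from Section \ref{Sec:moduli}, translating ``two non-conjugate Fuchsian groups $\theta_{1}(\Gamma),\theta_{2}(\Gamma)$ with $\theta_{1}(\Gamma_{k})=\theta_{2}(\Gamma_{k})=K_{0}$'' into ``two non-conjugate $(g,k)$-Fermat groups of ${\mathbb H}^{2}/K_{0}$'' via part (2) of Theorem \ref{teo1}. You simply carry out the lifting/descent bookkeeping that the paper leaves implicit.
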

\begin{proof}
By Lemma \ref{lema2},  $\Phi_{\Gamma_{k}}$ is non-injective if and only if there are two Fuchsian geometrical representations $\theta_{1}, \theta_{2}$ of $\Gamma \cong \pi_{g}$ such that $\theta_{1}(\Gamma^{(k)})=\theta_{2}(\Gamma^{(k)})=K_{0}$ and with $\theta_{1}(\Gamma)$ and $\theta_{2}(\Gamma)$ being not ${\rm PSL}_{2}({\mathbb R})$ conjugated, that is, the $(g,k)$-Fermat curve $S={\mathbb H}^{2}/K_{0}$ admits two non-conjugated $(g,k)$-Fermat groups $H_{1}=\theta_{1}(\Gamma)/K_{0}$ and $H_{2}=\theta_{2}(\Gamma)/K_{0}$.
\end{proof}

As a consequence of Theorem \ref{teo1} together with Proposition \ref{propo3}, we obtain the following fact.

\begin{prop}\label{teo3}
Let $\Gamma$ be a co-compact torsion free Fuchsian group of genus $g \geq 2$, $r \geq 1$ an integer, and let $p \geq 3$ be a prime integer such that $g\notin \{1+ap+b(p-1)/2; a,b \in \{0,1,\ldots\}\}$.
Then $\Phi_{\Gamma^{(p^{r})}}:{\mathcal M}(\Gamma) \to {\mathcal M}(\Gamma^{(p^{r})})$ is injective.
\end{prop}

%%%%%%%%%%%%%%%%%%
%%%%%%%%%%%%%%%%%%

\end{document}